\theoremstyle{plain}
\newtheorem{thm}{Theorem}
\newtheorem{lem}{Lemma}
\theoremstyle{remark}
\newtheorem{rem}{Remark}
\DeclareMathOperator{\td}{d}
\numberwithin{equation}{section}
\date{Complete on 29 December 2009 in Tianjin Polytechnic University}
\date{}
\begin{document}

\title[A refinement of an inequality for the gamma function]
{A refinement of a double inequality for the gamma function}

\author[F. Qi]{Feng Qi}
\address[F. Qi]{Department of Mathematics, College of Science, Tianjin Polytechnic University, Tianjin City, 300160, China}
\email{\href{mailto: F. Qi <qifeng618@gmail.com>}{qifeng618@gmail.com}, \href{mailto: F. Qi <qifeng618@hotmail.com>}{qifeng618@hotmail.com}, \href{mailto: F. Qi <qifeng618@qq.com>}{qifeng618@qq.com}}
\urladdr{\url{http://qifeng618.wordpress.com}}

\author[B.-N. Guo]{Bai-Ni Guo}
\address[B.-N. Guo]{School of Mathematics and Informatics, Henan Polytechnic University, Jiaozuo City, Henan Province, 454010, China}
\email{\href{mailto: B.-N. Guo <bai.ni.guo@gmail.com>}{bai.ni.guo@gmail.com},
\href{mailto: B.-N. Guo <bai.ni.guo@hotmail.com>}{bai.ni.guo@hotmail.com}}

\begin{abstract}
In the paper, we present a monotonicity result of a function involving the gamma function and the logarithmic function, refine a double inequality for the gamma function, and improve some known results for bounding the gamma function.
\end{abstract}

\keywords{monotonicity, generalization, refinement, sharpening, inequality, gamma function, Descartes' Sign Rule, open problem, conjecture}

\subjclass[2010]{Primary 26A48, 33B15; Secondary 26D15}

\thanks{The first author was supported in part by the Science Foundation of Tianjin Polytechnic University}

\thanks{This paper was typeset using \AmS-\LaTeX}

\maketitle

\section{Introduction}

In \cite{Ivady-JMI-09-03-02}, the following double inequality was complicatedly procured: For $x\in(0,1)$,
\begin{equation}\label{Ivady-JMI-09-03-02-ineq}
\frac{x^2+1}{x+1}<\Gamma(x+1)<\frac{x^2+2}{x+2},
\end{equation}
where $\Gamma(x)$ stands for the classical Euler's gamma function which may be defined for $x>0$ by
\begin{equation}\label{egamma}
\Gamma(x)=\int^\infty_0t^{x-1} e^{-t}\td t.
\end{equation}
\par
The aim of this paper is to simply and concisely generalize, refine and sharpen the double inequality~\eqref{Ivady-JMI-09-03-02-ineq}.
\par
Our main results may be stated as the following theorem.

\begin{thm}\label{bounds-for-gamma-thm}
The function
\begin{equation}\label{gamma-ln-ratio}
\frac{\ln\Gamma(x+1)}{\ln(x^2+1)-\ln(x+1)}
\end{equation}
is strictly increasing on $(0,1)$, with the limits
\begin{equation}
\lim_{x\to0^+}\frac{\ln\Gamma(x+1)}{\ln(x^2+1)-\ln(x+1)}=\gamma
\end{equation}
and
\begin{equation}
\lim_{x\to1^-}\frac{\ln\Gamma(x+1)}{\ln(x^2+1)-\ln(x+1)}=2(1-\gamma).
\end{equation}
As a result, the double inequality
\begin{equation}\label{bounds-for-gamma-ineq}
\biggl(\frac{x^2+1}{x+1}\biggr)^\alpha<\Gamma(x+1) <\biggl(\frac{x^2+1}{x+1}\biggr)^\beta
\end{equation}
holds on $(0,1)$ if and only if $\alpha\ge2(1-\gamma)$ and $\beta\le\gamma$, where $\gamma=0.57\dotsm$ stands for Euler-Mascheroni's constant. Consequently, the double inequality
\begin{multline}\label{bounds-for-gamma-ineq-infty}
\biggl[\frac{(x-\lfloor x\rfloor)^2+1}{x-\lfloor x\rfloor+1}\biggr]^\alpha\prod_{i=0}^{\lfloor x\rfloor-1}(x-i) <\Gamma(x+1)\\*
< \biggl[\frac{(x-\lfloor x\rfloor)^2+1}{x-\lfloor x\rfloor+1}\biggr]^\beta\prod_{i=0}^{\lfloor x\rfloor-1}(x-i)
\end{multline}
holds for $x\in(0,\infty)\setminus\mathbb{N}$ if and only if $\alpha\ge2(1-\gamma)$ and $\beta\le\gamma$, where $\lfloor x\rfloor$ represents the largest integer less than or equal to $x$.
\end{thm}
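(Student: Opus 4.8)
The plan is to reduce the whole theorem to one monotonicity statement plus two boundary limits. Put $u(x)=\ln\Gamma(x+1)$ and $v(x)=\ln(x^2+1)-\ln(x+1)$, so that the function \eqref{gamma-ln-ratio} is $f=u/v$. On $(0,1)$ one has $x^2+1<x+1$, hence $v<0$, and since $\Gamma(x+1)<1$ there, also $u<0$; moreover $u(0)=v(0)=u(1)=v(1)=0$. Dividing the candidate inequality \eqref{bounds-for-gamma-ineq} by $v<0$ turns it into $\beta<f(x)<\alpha$, so once $f$ is shown to increase strictly from $\lim_{x\to0^+}f=\gamma$ to $\lim_{x\to1^-}f=2(1-\gamma)$ (neither value attained), the two-sided bound holds for every $x$ precisely when $\alpha\ge2(1-\gamma)$ and $\beta\le\gamma$, giving \eqref{bounds-for-gamma-ineq}. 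The extension \eqref{bounds-for-gamma-ineq-infty} then follows by writing $x=\lfloor x\rfloor+t$ with $t=x-\lfloor x\rfloor\in(0,1)$ and iterating $\Gamma(x+1)=x\Gamma(x)$ to obtain $\Gamma(x+1)=\prod_{i=0}^{\lfloor x\rfloor-1}(x-i)\,\Gamma(t+1)$; applying \eqref{bounds-for-gamma-ineq} to $\Gamma(t+1)$ and multiplying by the positive product yields \eqref{bounds-for-gamma-ineq-infty}, while the ``only if'' direction comes from restricting to $x\in(0,1)$, where the product is empty.

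The two limits are immediate from L'Hospital's rule, each being of the form $0/0$. With $u'(x)=\psi(x+1)$ and $v'(x)=\dfrac{x^2+2x-1}{(x^2+1)(x+1)}$ one gets $\lim_{x\to0^+}f=\dfrac{\psi(1)}{v'(0)}=\dfrac{-\gamma}{-1}=\gamma$ and $\lim_{x\to1^-}f=\dfrac{\psi(2)}{v'(1)}=\dfrac{1-\gamma}{1/2}=2(1-\gamma)$, where $\psi=\Gamma'/\Gamma$, $\psi(1)=-\gamma$, and $\psi(2)=1-\gamma$.

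The heart of the matter is the strict monotonicity. Since $f'$ has the sign of $g:=u'v-uv'=\psi(x+1)v(x)-u(x)v'(x)$, I would prove $g>0$ on $(0,1)$ directly. The monotone form of L'Hospital's rule is unavailable here, because $v'$ vanishes at $x=\sqrt2-1$, so $u'/v'$ is not monotone on $(0,1)$. A short computation shows that $g$ has a double zero at each endpoint, with $g(x)=\tfrac12(3\gamma-\zeta(2))x^2+O(x^3)$ near $0$ and leading coefficient $g''(1)/2=\tfrac18(2\zeta(2)+\gamma-3)$ near $1$, both positive; hence $g>0$ and $f'>0$ near the endpoints. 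To propagate positivity across the interval I would expand $g$ (or $g$ divided by an elementary positive factor) as a Maclaurin series whose coefficients are explicit combinations of $\gamma$ and the values $\zeta(k)$, and invoke Descartes' Sign Rule in its power-series form to bound the number of zeros of $g$ in $(0,1)$, concluding that $g$ cannot change sign there and therefore stays positive.

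The main obstacle is exactly this last step: converting the transcendental inequality $g>0$ into a verifiable statement about the signs of the Maclaurin coefficients of $g$ so that Descartes' rule applies. Two points need care: establishing the required sign pattern of the coefficients for all indices $k$ (not merely the first few), and accommodating the fact that $g$ vanishes at the boundary point $x=1$ of the disk of convergence, which I expect to handle by factoring out the known double zeros at $0$ and $1$ before counting sign changes, or by matching against the already-computed endpoint limits of $f$. Once $g>0$ is secured, the strict monotonicity, the two limits, the sharp double inequality \eqref{bounds-for-gamma-ineq}, and its extension \eqref{bounds-for-gamma-ineq-infty} all follow as described above.
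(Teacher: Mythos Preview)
Your reductions—the limits via L'Hospital, the passage from the monotonicity to the sharp double inequality, and the iteration of $\Gamma(x+1)=x\Gamma(x)$ for \eqref{bounds-for-gamma-ineq-infty}—are correct and match the paper's logic. The endpoint Taylor analysis of $g=u'v-uv'$ is also computed correctly.

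The gap is in the monotonicity step, and it stems from a misdiagnosis. You say the monotone L'Hospital rule is unavailable because $v'$ vanishes at $x=\sqrt2-1$; but this obstacle disappears if one first divides both $u$ and $v$ by $x-1$. Writing $F(x)=\dfrac{\ln\Gamma(x+1)}{x-1}$ and $G(x)=\dfrac{1}{x-1}\ln\dfrac{x^2+1}{x+1}$, one has $F(0)=G(0)=0$ and the original ratio equals $\dfrac{F(x)-F(0)}{G(x)-G(0)}$. The paper proves $G'\neq0$ on $(0,1)$ (this is the inequality $h_2>0$ in Lemma~\ref{0<>0}) and then shows that $F'/G'$ is strictly increasing. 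The latter still requires real work—the paper differentiates $F'/G'$, reduces its sign to that of an auxiliary function $q$, differentiates again to reach a combination $q_1$ of $\psi'$ and $\psi''$, and bounds $q_1'$ using the polygamma estimates \eqref{qi-psi-ineq} together with finitely many polynomial sign checks via Descartes' rule (Lemma~\ref{0<>0})—but every ingredient is finite and mechanically verifiable.

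Your alternative, showing $g>0$ by controlling the full Maclaurin coefficient sequence and invoking a power-series Descartes rule, is not carried out and looks hard to complete: the coefficients are explicit but infinite in number, each mixing $\gamma$ with $\zeta(k)$ for increasing $k$, and establishing the required sign pattern \emph{for all} indices would itself demand an asymptotic argument you have not supplied. Factoring out the double zeros at $0$ and $1$ does not obviously help, since $x=1$ lies on the boundary of the disc of convergence and the quotient series would still have infinitely many coefficients to check. In short, you identified the right crux but chose the harder road; the paper's trick of pre-dividing by $x-1$ reopens the monotone L'Hospital avenue and leads to a proof with only finitely many elementary inequalities to verify.
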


In Section~\ref{sec-lem-ivady}, we cite three lemmas for proving in Section~\ref{sec-proof-ivady} Theorem~\ref{bounds-for-gamma-thm}. In Section~\ref{sec-rem-ivady}, we compare Theorem~\ref{bounds-for-gamma-thm} with several known results and pose some open problems and conjectures.

\section{Lemmas}\label{sec-lem-ivady}

In order to prove Theorem~\ref{bounds-for-gamma-thm}, we need the following lemma which can be found in \cite{AVV-Expos-89}, \cite[pp.~9--10, Lemma~2.9]{refine-jordan-kober.tex}, \cite[p.~71, Lemma~1]{elliptic-mean-comparison-rev2.tex} or closely-related references therein.

\begin{lem}\label{hospital-rule-ratio}
Let $f$ and $g$ be continuous on $[a,b]$ and differentiable on $(a,b)$ such
that $g'(x)\ne0$ on $(a,b)$. If $\frac{f'(x)}{g'(x)}$ is increasing $($or
decreasing$)$ on $(a,b)$, then so are the functions $\frac{f(x)-f(b)}{g(x)-g(b)}$ and
$\frac{f(x)-f(a)}{g(x)-g(a)}$ on $(a,b)$.
\end{lem}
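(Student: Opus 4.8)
The plan is to prove this monotone form of L'Hôpital's rule directly, via Cauchy's mean value theorem applied to the numerator that arises when one differentiates each quotient. I would begin by settling well-definedness. Since $g'$ is a derivative that never vanishes on $(a,b)$, Darboux's intermediate-value property for derivatives forces $g'$ to keep a constant sign there; hence $g$ is strictly monotone on $[a,b]$, so $g(x)-g(a)\ne0$ and $g(x)-g(b)\ne0$ for every $x\in(a,b)$, and both quotients are genuinely defined. Without loss of generality I would treat the case in which $f'/g'$ is increasing, the decreasing case following by replacing $f$ with $-f$.

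Next, writing $F(x)=\frac{f(x)-f(a)}{g(x)-g(a)}$, I would differentiate to obtain $F'(x)=\frac{N(x)}{[g(x)-g(a)]^2}$ with numerator $N(x)=f'(x)[g(x)-g(a)]-[f(x)-f(a)]g'(x)$, so that the sign of $F'$ is the sign of $N$. The key step is to apply Cauchy's mean value theorem on $[a,x]$: there is $\xi\in(a,x)$ with $f(x)-f(a)=\frac{f'(\xi)}{g'(\xi)}[g(x)-g(a)]$. Substituting this into $N(x)$ and factoring gives
\[
N(x)=[g(x)-g(a)]\,g'(x)\left[\frac{f'(x)}{g'(x)}-\frac{f'(\xi)}{g'(\xi)}\right].
\]
Now I read off the sign of each factor: because $g$ is strictly monotone, the product $[g(x)-g(a)]g'(x)$ is positive whether $g$ is increasing or decreasing; and because $\xi<x$ while $f'/g'$ is increasing, the bracketed difference is nonnegative (strictly positive when $f'/g'$ is strictly increasing, which propagates to strict monotonicity of $F$). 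Hence $N(x)\ge0$, so $F'(x)\ge0$ and $F$ is increasing on $(a,b)$.

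For $G(x)=\frac{f(x)-f(b)}{g(x)-g(b)}$ the argument is symmetric: Cauchy's theorem on $[x,b]$ supplies $\eta\in(x,b)$ with $f(x)-f(b)=\frac{f'(\eta)}{g'(\eta)}[g(x)-g(b)]$, and the analogous factorization of the numerator of $G'$ now pairs a \emph{negative} factor $[g(x)-g(b)]g'(x)$ with a \emph{nonpositive} bracket $\frac{f'(x)}{g'(x)}-\frac{f'(\eta)}{g'(\eta)}$ (since $\eta>x$), so their product is again nonnegative and $G'(x)\ge0$. The only genuine subtlety—hence the step I would watch most carefully—is the sign bookkeeping for the factors $[g(x)-g(a)]g'(x)$ and $[g(x)-g(b)]g'(x)$, which must come out positive and negative respectively, \emph{uniformly} across the increasing and decreasing alternatives for $g$. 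This is exactly where the constant-sign conclusion extracted from Darboux's theorem is indispensable, since it is what lets a single factorization cover both orientations of $g$ at once.
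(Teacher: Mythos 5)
Your proof is correct. The paper itself gives no proof of Lemma~\ref{hospital-rule-ratio}; it is stated as a quoted result with pointers to the literature (Anderson--Vamanamurthy--Vuorinen and the other cited references), and your argument is exactly the standard proof found there: differentiate each quotient, apply Cauchy's mean value theorem to rewrite the difference $f(x)-f(a)$ (resp.\ $f(x)-f(b)$) as $\frac{f'(\xi)}{g'(\xi)}$ times the corresponding difference of $g$, and factor the numerator as $[g(x)-g(a)]\,g'(x)\bigl[\frac{f'(x)}{g'(x)}-\frac{f'(\xi)}{g'(\xi)}\bigr]$. Your two supporting observations are precisely the points that make the argument airtight: Darboux's theorem gives $g'$ a constant sign, so both quotients are well defined and the factors $[g(x)-g(a)]g'(x)$ and $[g(x)-g(b)]g'(x)$ have fixed signs (positive and negative, respectively) independently of the orientation of $g$; and the position of the Cauchy point ($\xi<x$ in one case, $\eta>x$ in the other) pairs with these signs so that both numerators come out nonnegative, with strictness inherited from strict monotonicity of $f'/g'$. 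Since the paper delegates this lemma to citations, your write-up in effect supplies the self-contained proof the paper omits, and it matches the canonical one.
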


We also need the following elementary conclusions.

\begin{lem}\label{0<>0}
For $x\in(0,1)$, we have
\begin{align*}
  x^4+4 x^3-2 x^2-4 x-3&<0,\\
(x-1)\bigl(x^2+2x-1\bigr)-(x+1)\bigl(x^2+1\bigr)\ln\frac{x^2+1}{x+1}&>0,\\
x^6+6x^5-3 x^4-16 x^3-21 x^2-6 x-1&<0,\\
x^5+5 x^4-2 x^3-8 x^2-7 x-1&<0,\\
5 x^7+34 x^6+27 x^5-62 x^4-205 x^3-198 x^2-83 x-6&<0.
\end{align*}
\end{lem}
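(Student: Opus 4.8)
The plan is to dispatch the four purely polynomial inequalities (the first, third, fourth and fifth lines) by a uniform application of Descartes' Sign Rule, and then to derive the single transcendental inequality (the second line) from the first one.

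For the polynomial $P(x)$ in any of the four polynomial lines I would first read off the sign pattern of its coefficients, listed from the leading term down to the constant term. In each case there is exactly one change of sign: $+,+,-,-,-$ for $x^4+4x^3-2x^2-4x-3$; $+,+,-,-,-,-,-$ for $x^6+6x^5-3x^4-16x^3-21x^2-6x-1$; $+,+,-,-,-,-$ for $x^5+5x^4-2x^3-8x^2-7x-1$; and $+,+,+,-,-,-,-,-$ for $5x^7+34x^6+27x^5-62x^4-205x^3-198x^2-83x-6$. By Descartes' Sign Rule each of these polynomials has exactly one positive real root. Next I would evaluate $P$ at the endpoints: in every case the constant term $P(0)$ and the value $P(1)$ are both strictly negative (for instance $P(0)=-3$ and $P(1)=-4$ in the first line, $P(0)=-1$ and $P(1)=-40$ in the third, and so on). Since $P$ has the same sign at $0$ and at $1$, the number of its roots in $(0,1)$, counted with multiplicity, is even; as the total number of positive roots is the odd number $1$, there can be no root in $(0,1)$. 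A continuous function with no zero on $(0,1)$ and a negative value at an endpoint stays negative throughout, which is precisely the asserted inequality.

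The second line is the genuine obstacle because of the logarithmic term, and I would treat it by reduction to the first line. Since $(x+1)(x^2+1)>0$, dividing the claimed inequality by this factor turns it into
\[
\phi(x):=\ln\frac{x^2+1}{x+1}-\frac{(x-1)\bigl(x^2+2x-1\bigr)}{(x+1)\bigl(x^2+1\bigr)}<0 ,
\]
and a direct substitution gives $\phi(1)=0$. Writing the rational part as $1-\frac{4x}{(x+1)(x^2+1)}$ and differentiating, the numerator of $\phi'(x)$ over the common denominator $\bigl[(x+1)(x^2+1)\bigr]^2$ simplifies to
\[
\bigl(x^2+2x-1\bigr)(x+1)\bigl(x^2+1\bigr)-4\bigl(2x^3+x^2-1\bigr)=x^5+3x^4-6x^3-2x^2+x+3=(x-1)\bigl(x^4+4x^3-2x^2-4x-3\bigr),
\]
that is, $(x-1)$ times exactly the quartic of the first line. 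On $(0,1)$ the factor $x-1$ is negative and, by the first inequality already proved, the quartic factor is also negative, so their product is positive; hence $\phi'(x)>0$ and $\phi$ is strictly increasing on $(0,1)$. Together with $\phi(1)=0$ this forces $\phi(x)<\phi(1)=0$ on $(0,1)$, which is the second inequality.

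The one step requiring care is the factorization displayed above, because the emergence of precisely the first-line quartic is what makes the whole scheme close up and explains why that quartic is listed at all; everything else is endpoint arithmetic and sign bookkeeping. I would therefore verify that algebraic identity explicitly, after which the five inequalities follow in the order: the four polynomial lines by Descartes' Sign Rule, then the logarithmic line by the monotonicity argument, using the first line as input.
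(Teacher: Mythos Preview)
Your proposal is correct and follows essentially the same route as the paper: Descartes' Sign Rule plus endpoint evaluation for the four polynomial inequalities, and for the logarithmic one, division by $(x+1)(x^2+1)$ followed by differentiation to reveal the factor $(x-1)\bigl(x^4+4x^3-2x^2-4x-3\bigr)$ and a monotonicity argument. The only cosmetic differences are that the paper locates the unique positive root by checking $P(1)<0$ together with $P$ at a point beyond $1$ (e.g.\ $h_1(2)=29$) rather than your $P(0),P(1)<0$ parity argument, and that the paper anchors the monotone quotient at $x=0$ (value $1$) rather than at $x=1$ (value $0$) as you do.
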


\begin{proof}
For our own convenience, denote the functions above by $h_i(x)$ for $1\le i\le5$ on $[0,1]$ in order.
\par
By Descartes' Sign Rule, the function $h_1(x)$ has just one possible positive root. Since $h_1(1)=-4$ and $h_1(2)=29$, the function $h_1(x)$ is negative on $[0,1]$.
\par
A straightforward calculation gives
$$
\frac{\td}{\td x}\biggl[\frac{h_2(x)}{(x+1)(x^2+1)}\biggr]=\frac{(x-1)h_1(x)}{(x+1)^2(x^2+1)^2},
$$
so the function $\frac{h_2(x)}{(x+1)(x^2+1)}$ is strictly increasing on $[0,1]$. Due to $h_2(0)=1$, it is derived that $h_2(x)>0$ on $(0,1)$.
\par
Since
\begin{align*}
  h_3(1)&=-40, & h_3(3)&=1304,& h_4(1)&=-12,\\
  h_4(2)&=49, &  h_5(1)&=-488, & h_5(2)&=84,
\end{align*}
using Descartes' Sign Rule again yields the negativity of the functions $h_i(x)$ for $3\le i\le5$ on $(0,1)$. The proof of Lemma~\ref{0<>0} is complete.
\end{proof}

For our own convenience, we also recite the following double inequality for polygamma functions $\psi^{(k)}(x)$ on $(0,\infty)$.

\begin{lem}\label{qi-psi-ineq-lem}
The double inequality
\begin{equation}\label{qi-psi-ineq}
\frac{(k-1)!}{x^k}+\frac{k!}{2x^{k+1}}<(-1)^{k+1}\psi^{(k)}(x) <\frac{(k-1)!}{x^k}+\frac{k!}{x^{k+1}}
\end{equation}
holds for $x>0$ and $k\in\mathbb{N}$.
\end{lem}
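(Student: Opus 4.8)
The plan is to start from the standard integral representation of the polygamma functions. For $k\in\mathbb{N}$ and $x>0$, expanding $\frac1{1-e^{-t}}=\sum_{n=0}^\infty e^{-nt}$ and integrating termwise (legitimate by Tonelli's theorem, since every term is nonnegative) gives
\[
(-1)^{k+1}\psi^{(k)}(x)=\int_0^\infty\frac{t^k}{1-e^{-t}}\,e^{-xt}\td t
=k!\sum_{n=0}^\infty\frac1{(x+n)^{k+1}},
\]
the last series being the familiar closed form of $(-1)^{k+1}\psi^{(k)}$. The two bounds in \eqref{qi-psi-ineq} are themselves Laplace transforms of monomials: from $\int_0^\infty t^{m}e^{-xt}\td t=\frac{m!}{x^{m+1}}$ one gets $\frac{(k-1)!}{x^k}=\int_0^\infty t^{k-1}e^{-xt}\td t$ and $\frac{k!}{x^{k+1}}=\int_0^\infty t^{k}e^{-xt}\td t$. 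Thus all three quantities in \eqref{qi-psi-ineq} are integrals against the common positive weight $e^{-xt}\td t$, and the whole matter reduces to comparing integrands.

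Factoring $t^{k-1}$ out of each integrand, the double inequality \eqref{qi-psi-ineq} collapses, uniformly in $k$, to the single pointwise estimate
\[
1+\frac{t}{2}<\frac{t}{1-e^{-t}}<1+t,\qquad t\in(0,\infty).
\]
Granting this, I multiply the chain by the strictly positive factor $t^{k-1}e^{-xt}$ and integrate over $(0,\infty)$; since the pointwise inequalities are strict on a set of positive measure, the integrated inequalities remain strict, which is exactly \eqref{qi-psi-ineq}.

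It then remains only to prove the pointwise estimate. The upper bound, after clearing the positive denominator $1-e^{-t}$, is equivalent to $(1+t)e^{-t}<1$, i.e.\ to the classical inequality $e^{t}>1+t$ for $t>0$. For the lower bound, clearing denominators reduces the claim to $g(t):=1+\frac{t}{2}-\bigl(1-\frac{t}{2}\bigr)e^{t}>0$ on $(0,\infty)$. One checks $g(0)=0$ and $g'(0)=0$, while $g''(t)=\frac{t}{2}e^{t}>0$ for $t>0$; hence $g'$ increases from $g'(0)=0$ and stays positive, so $g$ increases from $g(0)=0$ and stays positive. Equivalently, using the identity $\frac{t}{1-e^{-t}}=\frac{t}{2}+\frac{t}{2}\coth\frac{t}{2}$, the lower bound is precisely $\frac{t}{2}\coth\frac{t}{2}>1$, which follows from $u\coth u>1$ (that is, $u\cosh u>\sinh u$) for $u>0$.

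The only genuinely delicate point is the lower pointwise bound: unlike the upper one, it does not reduce to a one-line convexity fact, and its constant is optimal. Indeed the expansion $\frac{t}{1-e^{-t}}=1+\frac{t}{2}+\frac{t^2}{12}+\cdots$ near $t=0$ shows both that the lower bound holds for small $t$ and that no constant larger than $\frac12$ could replace it; correspondingly $\frac{t}{1-e^{-t}}\sim t$ as $t\to\infty$ forces the coefficient $1$ in the upper bound to be best possible. Once the pointwise chain is secured, the passage back to \eqref{qi-psi-ineq} is automatic.
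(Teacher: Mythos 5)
Your proof is correct and complete, but there is nothing in the paper to compare it against: the paper does not prove Lemma~\ref{qi-psi-ineq-lem} at all, merely reciting the inequality~\eqref{qi-psi-ineq} and deferring to eight references. What you have written is, in effect, a reconstruction of the standard proof found in those cited sources: one starts from the representation
\begin{equation*}
(-1)^{k+1}\psi^{(k)}(x)=\int_0^\infty\frac{t^k}{1-e^{-t}}\,e^{-xt}\,\td t,
\end{equation*}
observes that both bounds in~\eqref{qi-psi-ineq} are Laplace transforms of $t^{k-1}\bigl(1+\frac t2\bigr)$ and $t^{k-1}(1+t)$ respectively, and reduces everything to the pointwise chain $1+\frac t2<\frac{t}{1-e^{-t}}<1+t$ on $(0,\infty)$, which is exactly the classical estimate $\frac12<\frac1{1-e^{-t}}-\frac1t<1$ used in the literature. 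Your verification of that chain is sound on both sides: the upper bound does reduce to $e^t>1+t$, and for the lower bound your function $g(t)=1+\frac t2-\bigl(1-\frac t2\bigr)e^t$ indeed satisfies $g(0)=g'(0)=0$ and $g''(t)=\frac t2e^t>0$, so $g>0$ on $(0,\infty)$; the alternative route via $u\coth u>1$ is equally valid. Strictness survives integration since the pointwise inequalities are strict on all of $(0,\infty)$, and your closing remark that the constants $\frac12$ and $1$ are best possible (from the behavior of $\frac{t}{1-e^{-t}}$ as $t\to0^+$ and $t\to\infty$) is a genuine bonus, as it shows the lemma cannot be sharpened in this form. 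The one presentational quibble: rather than deriving the integral representation from the series $k!\sum_{n\ge0}(x+n)^{-k-1}$ and calling the latter ``familiar,'' it would be cleaner to cite the integral representation of $\psi^{(k)}$ directly, since that is the only form your argument actually uses.
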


For the proof of the inequality~\eqref{qi-psi-ineq}, please refer to~\cite[p.~131]{subadditive-qi.tex}, \cite[p.~223, Lemma~2.3]{property-psi.tex}, \cite[p.~107, Lemma~3]{theta-new-proof.tex-BKMS}, \cite[p.~853]{Guo-Qi-Srivasta-Unique.tex}, \cite[p.~55, Theorem~5.11]{bounds-two-gammas.tex}, \cite[p.~1625]{Extension-TJM-2003.tex}, \cite[p.~79]{AAM-Qi-09-PolyGamma.tex}, \cite[p.~2155, Lemma~3]{subadditive-qi-guo-jcam.tex} and closely-related references therein.

\section{Proof of Theorem~\ref{bounds-for-gamma-thm}}\label{sec-proof-ivady}

Now we are in a position to prove our main results in Theorem~\ref{bounds-for-gamma-thm}.
\par
It is easy to see that
\begin{equation}\label{ratio-gamma-ln}
\begin{gathered}
\frac{\ln\Gamma(x+1)}{\ln(x^2+1)-\ln(x+1)} =\frac{\frac1{x-1}\ln\Gamma(x+1)}{\frac1{x-1}\ln\frac{x^2+1}{x+1}} =\frac{\ln\sqrt[x-1]{\Gamma(x+1)}\,}{\ln\sqrt[x-1]{\frac{x^2+1}{x+1}}\,} \\
=\frac{\ln\sqrt[x-1]{\Gamma(x+1)}\,-\ln\sqrt[0-1]{\Gamma(0+1)}\,} {\ln\sqrt[x-1]{\frac{x^2+1}{x+1}}\,-\ln\sqrt[0-1]{\frac{0^2+1}{0+1}}\,}
=\frac{f(x)-f(0)}{g(x)-g(0)},
\end{gathered}
\end{equation}
where
\begin{gather*}
f(x)=\ln\sqrt[x-1]{\Gamma(x+1)}\,\quad \text{and}\quad g(x)=\ln\sqrt[x-1]{\frac{x^2+1}{x+1}}\,
\end{gather*}
on $[0,1]$. Easy computation and simplification yield
\begin{equation*}
\frac{f'(x)}{g'(x)}=\frac{(x+1)\bigl(x^2+1\bigr) [(x-1)\psi(x+1)-\ln\Gamma(x+1)]}{(x-1)(x^2+2x-1)-(x+1)(x^2+1)\ln\frac{x^2+1}{x+1}}
\end{equation*}
and
\begin{equation*}
\frac{\td}{\td x}\biggl[\frac{f'(x)}{g'(x)}\biggr]=\frac{(1-x)\bigl(x^4+4 x^3-2 x^2-4 x-3\bigr)q(x)} {\bigl[(x-1)\bigl(x^2+2x-1\bigr)-(x+1)\bigl(x^2+1\bigr)\ln\frac{x^2+1}{x+1}\bigr]^2},
\end{equation*}
where
\begin{multline*}
  q(x)=\ln\Gamma(x+1)-(x-1)\psi(x+1)
  -\frac{(x+1)\bigl(x^2+1\bigr)}{x^4+4 x^3-2 x^2-4 x-3}\\*
  \times\biggl[(x-1)\bigl(x^2+2 x-1\bigr) -(x+1)\bigl(x^2+1\bigr) \ln\frac{x^2+1}{x+1}\biggr] \psi'(x+1).
\end{multline*}
Further computation and simplification give
\begin{equation*}
q'(x)=\frac{(1-x)\bigl(x^2+2x-1\bigr)+(x+1)\bigl(x^2+1\bigr) \ln\frac{x^2+1}{x+1}}{(x^4+4 x^3-2 x^2-4 x-3)^2}q_1(x),
\end{equation*}
where
\begin{multline*}
q_1(x)=2 \bigl(x^6+6x^5-3 x^4-16 x^3-21 x^2-6 x-1\bigr) \psi'(x+1)\\*
    +(x+1)\bigl(x^2+1\bigr)\bigl(x^4+4 x^3-2 x^2-4 x-3\bigr)\psi''(x+1)
\end{multline*}
and satisfies
\begin{multline*}
  q_1'(x)=12 \bigl(x^5+5 x^4-2 x^3-8 x^2-7 x-1\bigr)\psi'(x+1)\\*
   +\bigl(x^4+4 x^3-2 x^2-4 x-3\bigr) \bigl[3\bigl(3x^2+2x+1\bigr)\psi''(x+1)\\*
  +(x+1)\bigl(x^2+1\bigr) \psi'''(x+1)\bigr].
\end{multline*}
By virtue of Lemmas~\ref{0<>0} and~\ref{qi-psi-ineq-lem}, we obtain
\begin{align*}
  q_1'(x)&<\bigl(x^4+4 x^3-2 x^2-4 x-3\bigr) \biggl\{(x+1)\bigl(x^2+1\bigr)\biggl[\frac{2}{(x+1)^3}+\frac{3}{(x+1)^4}\biggr]\\
   &\quad-3\bigl(3x^2+2x+1\bigr)\biggl[\frac{1}{(x+1)^2}+\frac{2}{(x+1)^3}\biggr]\biggr\}\\
   &\quad+12\bigl(x^5+5 x^4-2 x^3-8 x^2-7 x-1\bigr)\biggl[\frac{1}{x+1}+\frac{1}{2 (x+1)^2}\biggr]\\
   &=\frac{5 x^7+34 x^6+27 x^5-62 x^4-205 x^3-198 x^2-83 x-6}{(x+1)^3}\\
   &<0
\end{align*}
on $[0,1]$. So the function $q_1(x)$ is strictly decreasing on $[0,1]$. Since
$$
q_1(0)=-2\psi'(1)-3\psi''(1)=3.922\dotsm
$$
and
$$
q_1(1)=80\biggl(1-\frac{\pi ^2}{6}\biggr)-16 \psi''(2)=-45.128\dotsm,
$$
the function $q_1(x)$ has a unique zero on $(0,1)$, and so is the function $q'(x)$. As a result, the function $q(x)$ has a unique minimum on $(0,1)$. Because of
$$
q(0)=\frac{1}{3}\biggl(\frac{\pi^2}{6}-3\gamma\biggr)=-0.028\dotsm
$$
and $q(1)=0$, we obtain that $q(x)<0$ on $(0,1)$. Combining this with Lemma~\ref{0<>0} leads to
$$
\frac{\td}{\td x}\biggl[\frac{f'(x)}{g'(x)}\biggr]>0
$$
on $(0,1)$, which means that the function $\frac{f'(x)}{g'(x)}$ is strictly increasing on $(0,1)$. Furthermore, from Lemma~\ref{hospital-rule-ratio} and the equation \eqref{ratio-gamma-ln}, it follows that the function~\eqref{gamma-ln-ratio} is strictly increasing on $(0,1)$.
\par
By L'Hospital's rule, we have
\begin{align*}
\lim_{x\to0^+}\frac{\ln\Gamma(x+1)}{\ln(x^2+1)-\ln(x+1)} &=\lim_{x\to0^+}\frac{(x+1)\bigl(x^2+1\bigr)\psi(x+1)}{x^2+2 x-1}\\
&=-\psi(1)\\
&=\gamma
\end{align*}
and
\begin{align*}
\lim_{x\to1^-}\frac{\ln\Gamma(x+1)}{\ln(x^2+1)-\ln(x+1)} &=\lim_{x\to1^-}\frac{(x+1)\bigl(x^2+1\bigr)\psi(x+1)}{x^2+2 x-1}\\
&=2\psi(2)\\
&=2(1-\gamma).
\end{align*}
Hence, the double inequality~\eqref{bounds-for-gamma-ineq} and its sharpness follow.
\par
The double inequality~\eqref{bounds-for-gamma-ineq-infty} may be deduced from \eqref{bounds-for-gamma-ineq} and the recurrent formula $\Gamma(x+1)=x\Gamma(x)$ for $x>0$. The proof of Theorem~\ref{bounds-for-gamma-thm} is complete.

\section{Remarks}\label{sec-rem-ivady}

In this section, we compare Theorem~\ref{bounds-for-gamma-thm} with some known results and pose several open problems and conjectures.

\begin{rem}
It is clear that the double inequality~\eqref{bounds-for-gamma-ineq} refines the double inequality~\eqref{Ivady-JMI-09-03-02-ineq}.
Moreover, the inequality~\eqref{bounds-for-gamma-ineq} may be rearranged as
\begin{equation}\label{bounds-for-gamma-ineq-rearrang}
\frac1x\biggl(\frac{x^2+1}{x+1}\biggr)^{2(1-\gamma)}<\Gamma(x) <\frac1x\biggl(\frac{x^2+1}{x+1}\biggr)^\gamma,\quad x\in(0,1).
\end{equation}
\end{rem}

\begin{rem}\label{ivady-rem-2}
In \cite[p.~145, Theorem~2]{alzer-proc-ams-1999}, it was obtained that if $x\in(0,1)$, then
\begin{equation}\label{alzer-(4.1)}
  x^{\alpha(x-1)-\gamma}<\Gamma(x)<x^{\beta(x-1)-\gamma}
\end{equation}
with the best possible constants
\begin{equation}
  \alpha=1-\gamma=0.42278\dotsm\quad\text{and}\quad \beta=\frac12\biggl(\frac{\pi^2}6-\gamma\biggr)=0.53385\dotsm,
\end{equation}
and if $x\in(1,\infty)$, then \eqref{alzer-(4.1)} holds with the best possible constants
\begin{equation}
  \alpha=\frac12\biggl(\frac{\pi^2}6-\gamma\biggr)\quad \text{and}\quad \beta=1.
\end{equation}
\par
In \cite[p.~780, Corollary]{Alzer-Batir-07-AML}, the following conclusion was established: Let $\alpha$ and $\beta$ be nonnegative real numbers. For $x>0$, we have
\begin{equation}\label{alzer-batir-ineq-aml}
\sqrt{2\pi}\,x^x\exp\biggl[-x-\frac12\psi(x+\alpha)\biggr]<\Gamma(x) <\sqrt{2\pi}\,x^x\exp\biggl[-x-\frac12\psi(x+\beta)\biggr]
\end{equation}
with the best possible constants $\alpha=\frac13$ and $\beta=0$.
\par
In \cite[p.~3, Theorem~5]{note-on-li-chen.tex}, among other things, it was demonstrated that
for $x\in(0,1]$ we have
\begin{equation}\label{qi-guo-zhang-ineq-1}
\frac{x^{x[1-\ln x+\psi(x)]}}{e^x}<{\Gamma(x)}\le \frac{x^{x[1-\ln x+\psi(x)]}}{e^{x-1}}.
\end{equation}
\par
By the well-known software M\textsc{athematica} Version 7.0.0, we can show that
\begin{enumerate}
\item
the double inequalities~\eqref{bounds-for-gamma-ineq-rearrang} and \eqref{alzer-(4.1)} are not included each other on $(0,1)$,
\item
when $x>0$ is smaller, the double inequalities~\eqref{bounds-for-gamma-ineq-rearrang} is better than \eqref{alzer-(4.1)},
\item
the double inequality~\eqref{bounds-for-gamma-ineq-rearrang} improves~\eqref{alzer-batir-ineq-aml} on $(0,1)$,
\item
the left-hand side inequality in~\eqref{bounds-for-gamma-ineq-rearrang} refines the corresponding one in~\eqref{qi-guo-zhang-ineq-1},
\item
the right-hand side inequalities in~\eqref{bounds-for-gamma-ineq-rearrang} and \eqref{qi-guo-zhang-ineq-1} are not contained each other,
\item
when $x>0$ is smaller, the right-hand side inequality in~\eqref{bounds-for-gamma-ineq-rearrang} is better than the corresponding one in~\eqref{qi-guo-zhang-ineq-1}.
\end{enumerate}
\end{rem}

\begin{rem}\label{ivady-rem-3}
In \cite[Corollary~1.2, Theorem~1.4 and Theorem~1.5]{Batir-Arch-Math-08}, the following sharp inequalities for bounding the gamma function were obtained: For $x>0$, we have
\begin{gather}\label{Batir-Arch-Math-ineq-1.4}
\sqrt{2}\,\biggl(x+\frac12\biggr)^{x+1/2}e^{-x}\le\Gamma(x+1) \le e^{\gamma/e^{\gamma}}\biggl(x+\frac1{e^{\gamma}}\biggr)^{x+1/{e^{\gamma}}}e^{-x},\\
\sqrt{2e}\,\biggl(\frac{x+1/2}e\biggr)^{x+1/2}\le\Gamma(x+1) <\sqrt{2\pi}\,\biggl(\frac{x+1/2}e\biggr)^{x+1/2}\label{Batir-Arch-Math-ineq-1.5}
\end{gather}
and
\begin{multline}\label{Batir-Arch-Math-ineq-1.2}
\sqrt{2x+1}\,x^x\exp\biggl\{-\biggl[x+\frac1{6(x+3/8)}-\frac49\biggr]\biggr\}<\Gamma(x+1)\\*
 <\sqrt{\pi(2x+1)}\,x^x\exp\biggl\{-\biggl[x+\frac1{6(x+3/8)}\biggr]\biggr\}.
\end{multline}
\par
By the software M\textsc{athematica} Version 7.0.0, we can reveal that
\begin{enumerate}
\item
the double inequalities~\eqref{bounds-for-gamma-ineq} and~\eqref{Batir-Arch-Math-ineq-1.4} do not include each other on $(0,1)$,
\item
the right-hand side inequality in~\eqref{bounds-for-gamma-ineq} is better than the one in~\eqref{Batir-Arch-Math-ineq-1.5} on $(0,1)$,
\item
the left-hand side inequalities in~\eqref{bounds-for-gamma-ineq} and \eqref{Batir-Arch-Math-ineq-1.5} are not included each other on $(0,1)$,
\item
the lower bound in \eqref{bounds-for-gamma-ineq} improves the corresponding one in~\eqref{Batir-Arch-Math-ineq-1.2}, but the right-hand side inequalities in~\eqref{bounds-for-gamma-ineq} and \eqref{bounds-for-gamma-ineq} do not contain each other on $(0,1)$.
\end{enumerate}
\end{rem}

\begin{rem}
It is clear that when $x\in\mathbb{N}$ the inequality~\eqref{bounds-for-gamma-ineq-infty} becomes equality. This shows us that for $x>1$ the double inequality~\eqref{bounds-for-gamma-ineq-infty} is better than those double inequalities listed in the above Remarks~\ref{ivady-rem-2} and~\ref{ivady-rem-3}.
\end{rem}

\begin{rem}
In \cite[Theorem~1]{unit-ball.tex}, among other things, it was proved that the function
\begin{equation}\label{F(x)-dfn-Alzer}
F(x)=\frac{\ln\Gamma(x+1)}{x\ln(2x)}
\end{equation}
is both strictly increasing and strictly concave on $\bigl(\frac12,\infty\bigr)$. By L'Hospital Rule and the double inequality~\eqref{qi-psi-ineq} for $k=1$, we obtain
\begin{align*}
\lim_{x\to\infty}F(x)&=\lim_{x\to\infty}\frac{\psi(x+1)}{1+\ln(2x)} =\lim_{x\to\infty}\bigl[x\psi'(x+1)\bigr]=1,
\end{align*}
so it follows that $\Gamma(x+1)<(2x)^x$ on $\bigl(\frac12,\infty\bigr)$, which is not better than the right-hand side inequality in~\eqref{bounds-for-gamma-ineq} on $\bigl(\frac12,1\bigr)$.
\end{rem}

\begin{rem}
By similar argument to the proof of Theorem~\ref{bounds-for-gamma-thm}, we may prove that the function
\begin{equation}\label{gamma-ln-ratio-6}
  \frac{\ln\Gamma(x+1)}{\ln(x^2+6)-\ln(x+6)}
\end{equation}
is strictly decreasing on $(0,1)$. Consequently,
\begin{equation}
\biggl(\frac{x^2+6}{x+6}\biggr)^{6\gamma}<\Gamma(x+1) <\biggl(\frac{x^2+6}{x+6}\biggr)^{7(1-\gamma)}, \quad x\in(0,1).
\end{equation}
\par
Motivating by monotonic properties of the functions \eqref{gamma-ln-ratio} and \eqref{gamma-ln-ratio-6}, we pose the following open problem: What is the largest number $\lambda>1$ (or the smallest number $\lambda<6$ respectively) for the function
\begin{equation}\label{gamma-ln-ratio-lambda}
  \frac{\ln\Gamma(x+1)}{\ln(x^2+\lambda)-\ln(x+\lambda)}
\end{equation}
to be strictly increasing (or decreasing respectively) on $(0,1)$?
\end{rem}

\begin{rem}
Finally, we pose the following conjectures.
\begin{enumerate}
\item
The function \eqref{gamma-ln-ratio} is strictly increasing not only on $(0,1)$ but also on $(0,\infty)$.
\item
For $\tau>0$, the function
\begin{equation}
  \begin{cases}
    \dfrac{\ln\Gamma(x)}{\ln(x^2+\tau)-\ln(x+\tau)},& x\ne1\\
    -(1+\tau)\gamma, & x=1
  \end{cases}
\end{equation}
is strictly increasing with respect to $x\in(0,\infty)$.
\item
Recall from~\cite[Chapter~XIII]{mpf-1993}, \cite[Chapter~1]{Schilling-Song-Vondracek-2010} or~\cite[Chapter~IV]{widder} that a function $f$ is completely monotonic on an interval $I$ if $f$ has derivatives of all orders on $I$ and
\begin{equation}\label{CM-dfn}
0\le(-1)^{n}f^{(n)}(x)<\infty
\end{equation}
for $x\in I$ and $n\ge0$. We conjecture that the function
\begin{equation}\label{ln-x-frac-eq}
  h(x)=
  \begin{cases}
  \dfrac{\ln x}{\ln(1+x^2)-\ln(1+x)},&x\ne1\\
  2,&x=1
\end{cases}
\end{equation}
is completely monotonic on $(0,\infty)$.
\end{enumerate}
\end{rem}

\begin{rem}
For the history, backgrounds, origins, developments of bounding the gamma function, please refer to the expository and survey article~\cite{bounds-two-gammas.tex} and plenty of references therein.
\end{rem}

\begin{rem}
This paper is a revised version of the preprint~\cite{arxiv.org/abs/1001.1495}.
\end{rem}

\subsection*{Acknowledgements}
The authors appreciate anonymous referees for their helpful and valuable comments on this paper.

\end{document}